%
\documentclass[runningheads]{llncs}
\usepackage{makeidx}
\usepackage{amsmath,amssymb}
\usepackage{verbatim}

\usepackage{exscale}
\usepackage{tabularx}
\usepackage{latexsym}
\usepackage[usenames,dvipsnames]{color}
\usepackage{graphicx,epsfig}
\usepackage{hyperref}
\usepackage{url}
\usepackage{graphicx}
%

\newcommand{\mb}[1]{\mbox{\boldmath $#1$}}

\newcommand{\bea}{\begin{eqnarray}}
\newcommand{\eea}{\end{eqnarray}}



\newcommand{\vect}[1]{{\boldsymbol #1 }}

\newcommand{\inprod}[2]{\langle #1 , #2 \rangle }

\newcommand{\bc}{\begin{center}}
	\newcommand{\ec}{\end{center}}

\newcommand{\bY}{\vect{Y}}
\newcommand{\refs}[1]{$(\ref{#1})$}

\newcommand{\bX}{\vect{X}}

\newcommand{\bZ}{\boldsymbol Z}
\newcommand{\R}{\mathbb R}

\newcommand{\be}{\begin{equation}}
\newcommand{\ee}{\end{equation}}
\newcommand{\beaa}{\begin{eqnarray*}}
	\newcommand{\eeaa}{\end{eqnarray*}}
\newcommand{\ben}{\begin{enumerate}}
	\newcommand{\een}{\end{enumerate}}
\newcommand{\db}{\hspace*{\fill}{\zapf o}}

\newcommand{\bpn}{\begin{proposition}\twlsf}
	\newcommand{\epn}{\db\end{proposition}}
\newcommand{\bdm}{\begin{displaymath}}
\newcommand{\edm}{\end{displaymath}}
\newcommand{\ba}{\begin{array}}
	\newcommand{\ea}{\end{array}}


\newcommand{\st}{\mathop{\rm s.t.}}





\newcommand{\eps}{\epsilon}

\newcommand{\norm}[1]{\left\lVert#1\right\rVert}
\newcommand{\tnorm}[1]{\lVert\mkern-2mu |#1|\mkern-2mu\rVert}

\begin{document}
\title{Low-rank matrix recovery with Ky Fan $2$-$k$-norm\thanks{This work is partially supported by the Alan Turing Fellowship of the first author.}}
%
%
\author{Xuan Vinh Doan\inst{1,2}
\and  Stephen Vavasis\inst{3}
}
\authorrunning{X. V. Doan and S. Vavasis}
%
\institute{Operations Group, Warwick Business School, University of Warwick, Coventry, CV4 7AL, United Kingdom \\
\email{Xuan.Doan@wbs.ac.uk} \and
The Alan Turing Institute, British Library, 96 Euston Road, London NW1 2DB, United Kingdom \and 
Combinatorics and Optimization, University of Waterloo, 200 University Avenue West, Waterloo, ON N2L 3G1, Canada\\
\email{vavasis@uwaterloo.ca}}
\maketitle              
\begin{abstract}
We propose Ky Fan $2$-$k$-norm-based models for the non-convex low-rank matrix recovery problem. A general difference of convex algorithm (DCA) is developed to solve these models. Numerical results show that the proposed models achieve high recoverability rates.

\keywords{Rank minimization  \and Ky Fan $2$-$k$-norm \and Matrix recovery.}
\end{abstract}
\section{Introduction}
\label{sec:intro}
Matrix recovery problem concerns the construction of a matrix from incomplete information of its entries. This problem has a wide range of applications such as recommendation systems with incomplete information of users' ratings or sensor localization problem with partially observed distance matrices (see, e.g., \cite{candes2009exact}). In these applications, the matrix is usually known to be (approximately) low-rank. Finding these low-rank matrices are theoretically difficult due to their non-convex properties. Computationally, it is important to study the tractability of these problems given the large
scale of datasets considered in practical applications. Recht et al. \cite{RechtFazelParrilo} studied the low-rank matrix recovery problem using a convex relaxation approach which is tractable. More precisely, in order to recover a low-rank matrix $\mb{X}\in\mathbb{R}^{m\times n}$ which satisfy ${\cal A}(\mb{X})=\mb{b}$, where the linear map ${\cal A}:\R^{m\times n}\rightarrow\R^p$ and $\mb{b}\in\R^p$, $\mb{b}\neq\mb{0}$, are given, the following convex optimization problem is proposed:
\be
\label{eq:nuc}
\ba{rl}
\displaystyle\min_{\bX} & \norm{\bX}_*\\
\st & {\cal {A}}(\bX)=\mb{b},
\ea
\ee
where $\displaystyle\norm{\bX}_*=\sum_{i}\sigma_i(\bX)$ is the nuclear norm, the sum of all singular values of $\bX$. Recht et al. \cite{RechtFazelParrilo} showed the recoverability of this convex approach using some restricted isometry conditions of the linear map $\cal A$. In general, these restricted isometry conditions are not satisfied and the proposed convex relaxation can fail to recover the matrix $\bX$.

Low-rank matrices appear to be appropriate representations of data in other applications such
as biclustering of gene expression data. Doan and Vavasis \cite{doan2016finding} proposed a convex approach to recover low-rank clusters using dual Ky Fan $2$-$k$-norm instead of the nuclear norm. Ky Fan $2$-$k$-norm is defined as 
\be
\label{eq:kyfank2}
\tnorm{\mb{A}}_{k,2}=\left(\sum_{i=1}^k\sigma_i^2(\mb{A})\right)^{\frac{1}{2}},
\ee
where $\sigma_1\geq\ldots\sigma_k\geq 0$ are the first $k$ largest singular values of $\mb{A}$, $k\leq k_0=\mbox{rank}(\mb{A})$. The dual norm of the Ky Fan $2$-$k$-norm is denoted by $\tnorm{\,\cdot\,}_{k,2}^\star$,
\be
\label{eq:dkyfank2}
\ba{rl}
\displaystyle\tnorm{\mb{A}}_{k,2}^\star=\max_{\bX} & \inprod{\mb{A}}{\bX}\\
\st & \tnorm{\bX}_{k,2}\leq 1.
\ea
\ee
These unitarily invariant norms (see, e.g., Bhatia \cite{Bhatia97}) and their gauge functions have been used in sparse prediction problems \cite{Argyriou12}, low-rank regression analysis \cite{Giraud11} and multi-task learning regularization \cite{Jacob09}. When $k=1$, the Ky Fan $2$-$k$-norm is the spectral norm, $\norm{\mb{A}}=\sigma_1(\mb{A})$, the largest singular value of $\mb{A}$, whose dual norm is the nuclear norm. Similar to the nuclear norm, the dual Ky Fan $2$-$k$-norm with $k>1$ can be used to compute the $k$-approximation of a matrix $\mb{A}$ (Proposition 2.9, \cite{doan2016finding}), which demonstrates its low-rank property. Motivated by this low-rank property of the (dual) Ky Fan $2$-$k$-norm, which is more general than that of the nuclear norm, and its usage in other applications, in this paper, we propose a Ky Fan $2$-$k$-norm-based non-convex approach for the matrix recovery problem which aims to recover matrices which are not recoverable by the convex relaxation formulation \refs{eq:nuc}. In Section \ref{sec:models}, we discuss the proposed models in detail and in Section \ref{sec:comp}, we develop numerical algorithms to solve those models. Some numerical results will also be presented.   
\section{Ky Fan $2$-$k$-Norm-Based Models}
\label{sec:models}
The Ky Fan $2$-$k$-norm is the $\ell_2$-norm of the vector of $k$ largest singular values with $k\leq\min\{m,n\}$. Thus we have:
$$
\tnorm{\mb{A}}_{k,2}=\left(\sum_{i=1}^k\sigma_i^2(\mb{A})\right)^{\frac{1}{2}}\leq \norm{\mb{A}}_F=\left(\sum_{i=1}^{\min\{m,n\}}\sigma_i^2(\mb{A})\right)^{\frac{1}{2}},
$$
where $\norm{\cdot}$ is the Frobenius norm. Now consider the dual Ky Fan $2$-$k$-norm and use the definition of the dual norm, we obtain the following inequality:
$$
\norm{\mb{A}}_F^2=\inprod{\mb{A}}{\mb{A}}\leq\tnorm{\mb{A}}_{k,2}^{\star}\cdot\tnorm{\mb{A}}_{k,2}.
$$
Thus we have: 
\be
\label{eq:normin}
\tnorm{\mb{A}}_{k,2}\leq\norm{\mb{A}}_F\leq\tnorm{\mb{A}}_{k,2}^{\star}, \quad k\leq\min\{m,n\}.
\ee
It is clear that these inequalities become equalities if and only if $\text{rank}(\mb{A})\leq k$. It shows that to find a low-rank matrix $\bX$ that satisfies ${\cal A}(\bX)=\mb{b}$ with $\text{rank}(\bX)\leq k$, we can solve either the following optimization problem
\be
\label{eq:ratio}
\ba{rl}
\min & \displaystyle\frac{\tnorm{\bX}_{k,2}^{\star}}{\norm{\bX}_F}\\
\st & {\cal A}(\bX)=\mb{b},
\ea
\ee
or
\be
\label{eq:diff}
\ba{rl}
\min & \tnorm{\bX}_{k,2}^{\star}-\norm{\bX}_F\\
\st & {\cal A}(\bX)=\mb{b}.
\ea
\ee
It is straightforward to see that these non-convex optimization problems can be used to recover low-rank matrices as stated in the following theorem given the norm inequalities in \refs{eq:normin}.
\begin{theorem}
\label{thm:existence}
If there exists a matrix $\bX\in\R^{m\times n}$ such that $\mbox{rank}(\bX)\leq k$ and ${\cal A}(\bX)=\mb{b}$, then $\bX$ is an optimal solution of \refs{eq:ratio} and \refs{eq:diff}.
\end{theorem}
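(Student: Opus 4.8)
The plan is to use the norm inequalities in \refs{eq:normin} directly, together with the observation (already noted in the text) that both inequalities become equalities precisely when $\mbox{rank}(\mb{A})\leq k$. The key point is that the quantities being minimized in \refs{eq:ratio} and \refs{eq:diff} are both bounded below over the \emph{entire} space $\R^{m\times n}$ (not just over the feasible set), and the proposed low-rank matrix $\bX$ attains that global lower bound.

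First I would handle \refs{eq:ratio}. For any $\mb{Y}\in\R^{m\times n}$ with $\mb{Y}\neq\mb{0}$, the inequality $\norm{\mb{Y}}_F\leq\tnorm{\mb{Y}}_{k,2}^\star$ from \refs{eq:normin} gives $\tnorm{\mb{Y}}_{k,2}^\star/\norm{\mb{Y}}_F\geq 1$. Hence the objective of \refs{eq:ratio} is at least $1$ at every feasible point. (One should note in passing that feasibility forces $\mb{Y}\neq\mb{0}$, since $\mb{b}\neq\mb{0}$, so the ratio is well-defined on the feasible set.) On the other hand, for the given $\bX$ with $\mbox{rank}(\bX)\leq k$, the text's remark that \refs{eq:normin} holds with equality when the rank is at most $k$ gives $\tnorm{\bX}_{k,2}^\star=\norm{\bX}_F$, so the objective equals $1$ at $\bX$. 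Since $\bX$ is feasible by hypothesis, it attains the global lower bound $1$ and is therefore optimal.

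Next I would handle \refs{eq:diff} by the same mechanism: for any feasible $\mb{Y}$, $\tnorm{\mb{Y}}_{k,2}^\star-\norm{\mb{Y}}_F\geq 0$ by \refs{eq:normin}, while for the low-rank matrix $\bX$ we get $\tnorm{\bX}_{k,2}^\star-\norm{\bX}_F=0$; since $\bX$ is feasible, it is optimal. This completes the argument.

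The only thing requiring a little care — and the place I would expect the "obstacle," though it is minor — is the equality condition in \refs{eq:normin}: one must justify that $\mbox{rank}(\mb{A})\leq k$ implies $\tnorm{\mb{A}}_{k,2}=\norm{\mb{A}}_F=\tnorm{\mb{A}}_{k,2}^\star$. The first equality is immediate from the definitions in \refs{eq:kyfank2}, since $\sigma_i(\mb{A})=0$ for $i>k$. For the second, when $\mbox{rank}(\mb{A})\leq k$ one can verify that $\mb{X}=\mb{A}/\norm{\mb{A}}_F$ is feasible for the maximization \refs{eq:dkyfank2} defining $\tnorm{\mb{A}}_{k,2}^\star$ (because $\tnorm{\mb{X}}_{k,2}=\norm{\mb{X}}_F=1$ in the low-rank case), which yields $\tnorm{\mb{A}}_{k,2}^\star\geq\inprod{\mb{A}}{\mb{X}}=\norm{\mb{A}}_F$, and the reverse inequality is already in \refs{eq:normin}. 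With that in hand, the theorem follows as above.
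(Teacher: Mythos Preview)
Your proposal is correct and follows exactly the approach the paper indicates: the paper does not give a detailed proof but simply notes that the theorem is immediate from the norm inequalities in \refs{eq:normin} and the fact that they become equalities when $\mbox{rank}(\bX)\leq k$. Your write-up just makes these steps explicit (including the minor check that feasibility forces $\bX\neq\mb{0}$ and the verification of the equality case), so there is nothing to add.
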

Given the result in Theorem \ref{thm:existence}, the exact recovery of a low-rank matrix using \refs{eq:ratio} or \refs{eq:diff} relies on the uniqueness of the low-rank solution of ${\cal A}(\bX)=\mb{b}$. Recht et al. \cite{RechtFazelParrilo} generalized the restricted isometry property of vectors introduced by Cand\`es and Tao \cite{candes2005decoding} to matrices and use it to provide sufficient conditions on the uniqueness of these solutions.

\begin{definition}[Recht et al. \cite{RechtFazelParrilo}]
For every integer $k$ with $1\leq k\leq \min\{m,n\}$, the $k$-restricted isometry constant is defined as the smallest number $\delta_k({\cal A})$ such that
\be
\label{eq:rip}
\left(1-\delta_k({\cal A})\right)\norm{\bX}_F\leq \norm{{\cal A}(\bX)}_2\leq\left(1+\delta_k({\cal A})\right)\norm{\bX}_F
\ee
holds for all matrices $\bX$ of rank at most $k$.
\end{definition}
Using Theorem 3.2 in Recht et al. \cite{RechtFazelParrilo}, we can obtain the following exact recovery result for \refs{eq:ratio} and \refs{eq:diff}.
\begin{theorem}
\label{thm:unique}
Suppose that $\delta_{2k}<1$ and there exists a matrix $\bX\in\mathbb{R}^{m\times n}$ which satisfies ${\cal A}(\bX)=\mb{b}$ and $\text{rank}(\bX)\leq k$, then $\bX$ is the unique solution to \refs{eq:ratio} and \refs{eq:diff}, which implies exact recoverability.
\end{theorem}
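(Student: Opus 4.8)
The plan is to combine Theorem~\ref{thm:existence}, the equality case of the norm chain \refs{eq:normin}, and the restricted isometry property. By Theorem~\ref{thm:existence} the given matrix $\bX$ with $\text{rank}(\bX)\le k$ is already \emph{an} optimal solution of both \refs{eq:ratio} and \refs{eq:diff}; since $\text{rank}(\bX)\le k$, the inequalities in \refs{eq:normin} all hold with equality for $\bX$, so the optimal value of \refs{eq:ratio} is $1$ and the optimal value of \refs{eq:diff} is $0$. (Note $\bX\neq\mb{0}$ because $\cA(\bX)=\mb{b}\neq\mb{0}$, so the ratio objective is well defined on the feasible set, as every feasible point is nonzero.) It remains to establish uniqueness.

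First I would show that \emph{every} optimal solution is low-rank. Let $\bX'$ be any optimal solution of \refs{eq:ratio} (resp.\ of \refs{eq:diff}). Feasibility gives $\cA(\bX')=\mb{b}\neq\mb{0}$, hence $\bX'\neq\mb{0}$, and optimality forces $\tnorm{\bX'}_{k,2}^\star/\norm{\bX'}_F = 1$ (resp.\ $\tnorm{\bX'}_{k,2}^\star-\norm{\bX'}_F = 0$); in either case $\norm{\bX'}_F = \tnorm{\bX'}_{k,2}^\star$. Plugging this equality into the inequality $\norm{\bX'}_F^2\le \tnorm{\bX'}_{k,2}^\star\cdot\tnorm{\bX'}_{k,2}$ (the one used to derive \refs{eq:normin}) yields $\norm{\bX'}_F\le \tnorm{\bX'}_{k,2}$, which combined with $\tnorm{\bX'}_{k,2}\le\norm{\bX'}_F$ gives $\sum_{i>k}\sigma_i^2(\bX')=0$, i.e.\ $\text{rank}(\bX')\le k$.

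Next I would invoke the RIP. The difference $\bX-\bX'$ has rank at most $2k$ and satisfies $\cA(\bX-\bX')=\mb{0}$. Applying the lower bound in \refs{eq:rip} with the constant $\delta_{2k}(\cA)<1$ gives $\bigl(1-\delta_{2k}(\cA)\bigr)\norm{\bX-\bX'}_F\le \norm{\cA(\bX-\bX')}_2 = 0$; since $1-\delta_{2k}(\cA)>0$ this forces $\norm{\bX-\bX'}_F=0$, hence $\bX'=\bX$. Thus $\bX$ is the unique optimal solution of both problems, which is exactly the claimed exact recoverability (this last step is the matrix analogue of the argument underlying Theorem~3.2 of Recht et al.~\cite{RechtFazelParrilo}).

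I do not expect a serious obstacle; the only step needing care is the "low-rank" claim in the second paragraph, namely deducing $\text{rank}(\bX')\le k$ from the \emph{single} equality $\norm{\bX'}_F=\tnorm{\bX'}_{k,2}^\star$ rather than from the full chain of equalities in \refs{eq:normin}, together with the mild but necessary observation that $\mb{b}\neq\mb{0}$ keeps the ratio in \refs{eq:ratio} well defined.
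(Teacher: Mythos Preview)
Your proposal is correct and follows essentially the same approach the paper intends: the paper does not spell out a proof but simply says the result follows ``using Theorem~3.2 in Recht et al.~\cite{RechtFazelParrilo}'', and your argument is precisely the natural unpacking of that reference---Theorem~\ref{thm:existence} gives optimality of $\bX$ with objective $1$ (resp.\ $0$), the equality case of \refs{eq:normin} forces every optimizer to have rank at most $k$, and then the RIP with $\delta_{2k}<1$ yields uniqueness of rank-$\le k$ solutions exactly as in Recht et al. Your extra care in deducing $\text{rank}(\bX')\le k$ from the single equality $\norm{\bX'}_F=\tnorm{\bX'}_{k,2}^\star$ and in noting $\mb{b}\neq\mb{0}$ is sound and welcome.
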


The condition in Theorem \ref{thm:unique} is indeed better than those obtained for the nuclear norm approach (see, e.g., Theorem 3.3 in Recht et al. \cite{RechtFazelParrilo}). The non-convex optimization problems \refs{eq:ratio} and \refs{eq:diff} use norm ratio and difference. When $k=1$, the norm ratio and difference are computed between the nuclear and Frobenius norm. The idea of using these norm ratios and differences with $k=1$ has been used to generate non-convex sparse generalizer in the vector case, i.e., $m=1$. Yin et al. \cite{yin2014ratio} investigated the ratio $\ell_1/\ell_2$ while Yin et al. \cite{yin2015minimization} analyzed the difference $\ell_1-\ell_2$ in compressed sensing. Note that even though optimization formulations based on these norm ratios and differences are non-convex, they are still relaxations of $\ell_0$-norm minimization problem unless the sparsity level of the optimal solution is $s=1$. Our proposed approach is similar to the idea of the truncated difference of the nuclear norm and Frobenius norm discussed in Ma et al \cite{ma2017truncated}. Given a parameter $t\geq 0$, the truncated difference is defined as $\displaystyle\norm{\mb{A}}_{*,t-F}=\sum_{i=t+1}^{\min\{m,n\}}\sigma_i(\mb{A})-\left(\sum_{i=t+1}^{\min\{m,n\}}\sigma_i^2(\mb{A})\right)^{\frac{1}{2}}\geq 0$. For $t\geq k-1$, the problem of truncated difference minimization can be used to recover matrices with rank at most $k$ given that $\norm{\bX}_{*,t-F}=0$ if $\text{rank}(\bX)\leq t+1$. Similar results for exact recovery as in Theorem \ref{thm:unique} are provided in Theorem 3.7(a) in Ma et al \cite{ma2017truncated}. Despite the similarity with respect to the recovery results, the problems \refs{eq:ratio} and \refs{eq:diff} are motivated from a different perspective. We are now going to discuss how to solve these problems next.
\section{Numerical Algorithm}
\label{sec:comp}
\subsection{Difference of Convex Algorithms}
\label{ssec:dca}
We start with the problem \refs{eq:ratio}. It can be reformulated as
\be
\label{eq:zprob}
\ba{rl}
\displaystyle\max_{\vect{Z},z} & \norm{\bZ}_F^2\\
\st & \tnorm{\bZ}_{k,2}^{\star}\leq 1,\\
& {\cal A}(\bZ)-z\cdot\mb{b}=\mb{0},\\
& z > 0,
\ea
\ee
with the change of variables $z=1/\tnorm{\bX}_{k,2}^{\star}$ and $\bZ = \bX/\tnorm{\bX}_{k,2}^{\star}$. The compact formulation 
\be
\label{eq:czprob}
\min_{\vect{Z},z}\,\delta_{{\cal Z}}(\bZ,z) - \norm{\bZ}_F^2/2,
\ee
where $\cal Z$ is the feasible set of the problem \refs{eq:zprob} and $\delta_{{\cal Z}}(\cdot)$ is the indicator function of $\cal Z$. The problem \refs{eq:czprob} is a difference of convex (d.c.) optimization problem (see, e.g. \cite{tao1997convex}). The differnce of convex algorithm DCA proposed in \cite{tao1997convex} can be applied to the problem \refs{eq:czprob} as follows.

\begin{quote}
\textbf{Step 1.} Start with $(\bZ^0,z^0)=(\bX^0/\tnorm{\bX^0}_{k,2}^{\star},1/\tnorm{\bX^0}_{k,2}^{\star})$ for some $\bX^0$ such that ${\cal A}(\bX^0)=\mb{b}$ and set $s=0$.

\noindent
\textbf{Step 2.} Update $(\bZ^{s+1},z^{s+1})$ as an optimal solution of the following convex optimization problem
\be
\label{eq:rsprob}
\ba{rl}
\displaystyle\max_{\vect{Z},z} & \inprod{\bZ^s}{\bZ} \\
\st & \tnorm{\bZ}_{k,2}^{\star}\leq 1\\
& {\cal A}(\bZ)-z\cdot \mb{b}=\mb{0}\\
& z>0.
\ea
\ee
\noindent
\textbf{Step 3.} Set $s\leftarrow s+1$ and repeat Step 2.
\end{quote}

Let $\bX^s=\bZ^s/z^s$ and use the general convergence analysis of DCA (see, e.g., Theorem 3.7 in \cite{tao1998dc}), we can obtain the following convergence results.
\begin{proposition}
\label{prop:conv}
Given the sequence $\{\bX^s\}$ obtained from the DCA algorithm for the problem \refs{eq:czprob}, the following statements are true.
\begin{itemize}
\item[(i)] The sequence $\displaystyle\left\{\frac{\tnorm{\bX^s}_{k,2}^{\star}}{\norm{\bX^s}_F}\right\} $ is non-increasing and convergent.
\item[(ii)] $\displaystyle\norm{\frac{\bX^{s+1}}{\tnorm{\bX^{s+1}}_{k,2}^{\star}}-\frac{\bX^{s}}{\tnorm{\bX^{s}}_{k,2}^{\star}}}_F\rightarrow 0 $ when $s\rightarrow\infty$.
\end{itemize}
\end{proposition}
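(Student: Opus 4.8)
The plan is to read \refs{eq:rsprob} as exactly the DCA subproblem for the d.c.\ program \refs{eq:czprob} and then to harvest both claims from the elementary descent inequality the algorithm produces, so that only the last sentence appeals to the general theory. Write the objective of \refs{eq:czprob} as $g(\bZ,z)-h(\bZ,z)$ with $g(\bZ,z)=\delta_{{\cal Z}}(\bZ,z)$ and $h(\bZ,z)=\norm{\bZ}_F^2/2$; then $\nabla h(\bZ^s,z^s)=(\bZ^s,0)$ and the DCA update $\min_{\bZ,z}\,g(\bZ,z)-\inprod{(\bZ^s,0)}{(\bZ,z)}$ is precisely \refs{eq:rsprob}. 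Two facts are needed up front. First, by the left-hand inequality in \refs{eq:normin}, every $(\bZ,z)\in{\cal Z}$ satisfies $\norm{\bZ}_F\le\tnorm{\bZ}_{k,2}^\star\le 1$, so $-\norm{\bZ^s}_F^2/2$ is bounded below by $-1/2$. Second, the norm constraint is active at each subproblem optimum: since $\bX^s$ inherits ${\cal A}(\bX^s)=\mb{b}\neq\mb{0}$ (hence $\bX^s\neq\mb{0}$, hence $\bZ^s\neq\mb{0}$) and $\bZ^s$ is feasible for \refs{eq:rsprob}, the optimal value is at least $\inprod{\bZ^s}{\bZ^s}=\norm{\bZ^s}_F^2>0$; were $\tnorm{\bZ^{s+1}}_{k,2}^\star=\rho<1$, the rescaled point $(\bZ^{s+1}/\rho,z^{s+1}/\rho)$ would still be feasible and strictly better, a contradiction. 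Hence $\tnorm{\bZ^s}_{k,2}^\star=1$ for all $s$, which keeps the identifications $\bZ^s=\bX^s/\tnorm{\bX^s}_{k,2}^\star$ and $z^s=1/\tnorm{\bX^s}_{k,2}^\star$ valid along the whole iteration.

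The core step is the monotonicity of $\norm{\bZ^s}_F$. Because $\bZ^s$ is feasible for \refs{eq:rsprob}, optimality of $\bZ^{s+1}$ gives $\inprod{\bZ^s}{\bZ^{s+1}}\ge\norm{\bZ^s}_F^2$, and Cauchy--Schwarz then yields $\norm{\bZ^s}_F^2\le\inprod{\bZ^s}{\bZ^{s+1}}\le\norm{\bZ^s}_F\norm{\bZ^{s+1}}_F$, i.e.\ $\norm{\bZ^{s+1}}_F\ge\norm{\bZ^s}_F$. So $\{\norm{\bZ^s}_F\}$ is non-decreasing and bounded above by $1$, hence convergent, say to $L\in(0,1]$. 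Since $\tnorm{\bZ^s}_{k,2}^\star=1$ we have $\norm{\bZ^s}_F=\norm{\bX^s}_F/\tnorm{\bX^s}_{k,2}^\star$, so $\tnorm{\bX^s}_{k,2}^\star/\norm{\bX^s}_F=1/\norm{\bZ^s}_F$ is non-increasing and converges to $1/L$; this is (i).

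For (ii), using $\inprod{\bZ^s}{\bZ^{s+1}}\ge\norm{\bZ^s}_F^2$ once more, one has
\[
\norm{\bZ^{s+1}-\bZ^s}_F^2=\norm{\bZ^{s+1}}_F^2-2\inprod{\bZ^s}{\bZ^{s+1}}+\norm{\bZ^s}_F^2\le\norm{\bZ^{s+1}}_F^2-\norm{\bZ^s}_F^2.
\]
The right-hand side is a difference of consecutive terms of the convergent sequence $\{\norm{\bZ^s}_F^2\}$, hence tends to $0$, so $\norm{\bZ^{s+1}-\bZ^s}_F\to 0$; substituting $\bZ^s=\bX^s/\tnorm{\bX^s}_{k,2}^\star$ gives precisely the statement in (ii). (This is the instance of Theorem~3.7 in \cite{tao1998dc} in which the convex part $h$ is $1$-strongly convex in the $\bZ$-block, which is why the conclusion controls only the $\bZ$-component and not $z$.)

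The only real obstacle is the well-posedness bookkeeping: one must ensure that \refs{eq:rsprob} actually attains an optimum with $z^{s+1}>0$ (its feasible set fails to be closed because of the strict inequality $z>0$) and that the active-constraint normalization $\tnorm{\bZ^s}_{k,2}^\star=1$ genuinely propagates from the initialization in Step~1. Once those points are settled, everything above is just Cauchy--Schwarz together with the monotone convergence theorem, and the d.c.\ structure is used only to certify that \refs{eq:rsprob} is the correct subproblem in the first place.
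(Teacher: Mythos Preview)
Your argument is correct. The paper itself does not give a proof of this proposition: it simply invokes the general DCA convergence analysis, specifically Theorem~3.7 in \cite{tao1998dc}, and records the two conclusions as a direct specialization. Your write-up instead supplies a self-contained elementary argument: you verify that \refs{eq:rsprob} is exactly the DCA subproblem for the decomposition $g=\delta_{\cal Z}$, $h=\tfrac12\norm{\cdot}_F^2$, prove that the norm constraint is active at every iterate so that $\tnorm{\bZ^s}_{k,2}^\star=1$, and then derive (i) and (ii) by feeding the optimality inequality $\inprod{\bZ^s}{\bZ^{s+1}}\ge\norm{\bZ^s}_F^2$ into Cauchy--Schwarz and into the expansion of $\norm{\bZ^{s+1}-\bZ^s}_F^2$. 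This is precisely the mechanism that makes the cited DCA theorem work in the present setting (the $1$-strong convexity of $h$ in the $\bZ$-block is what yields the telescoping bound $\norm{\bZ^{s+1}-\bZ^s}_F^2\le\norm{\bZ^{s+1}}_F^2-\norm{\bZ^s}_F^2$), so your route is not really a different idea but rather an explicit unpacking of the black-box citation. The gain is transparency and independence from the reference; the cost is that you must do the active-constraint bookkeeping and flag the attainment issue created by the open constraint $z>0$, which the paper simply does not discuss.
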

The convergence results show that the DCA algorithm improves the objective of the ratio minimization problem \refs{eq:ratio}. The DCA algorithm can stop if $(\bZ^s,z^s)\in{\cal O}(\bZ^s)$, where ${\cal O}(\bZ^s)$ is the set of optimal solution of \ref{eq:rsprob} and $(\bZ^s,z^s)$ which satisfied this condition is called a critical point. Note that (local) optimal solutions of \refs{eq:czprob}can be shown to be critical points. The following proposition shows that an equivalent condition for critical points.

\begin{proposition}
\label{prop:ocond}
$(\bZ^s,z^s)\in{\cal O}(\bZ^s)$ if and only if $\bY=\mb{0}$ is an optimal solution of the following optimization problem
\be
\label{eq:null}
\ba{rl}
\displaystyle\min_{\bY} & \displaystyle\tnorm{\bX^s+\bY}_{k,2}^\star - \frac{\tnorm{\bX^s}_{k,2}^\star}{\norm{\bX^s}_F^2}\cdot\inprod{\bX^s}{\bY}\\
\st & {\cal A}(\bY)=\mb{0}.
\ea
\ee
\end{proposition}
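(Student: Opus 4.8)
The plan is to unwind the condition $(\bZ^s,z^s)\in{\cal O}(\bZ^s)$, push the optimization \refs{eq:rsprob} back through the change of variables $\bZ=\bX/\tnorm{\bX}_{k,2}^\star$, $z=1/\tnorm{\bX}_{k,2}^\star$, and then recognize the resulting family of inequalities as the statement that $\bY=\mb{0}$ minimizes \refs{eq:null}.

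First I would collect two elementary facts. Since $\mb{b}\neq\mb{0}$, every feasible point of \refs{eq:rsprob} has $\bZ\neq\mb{0}$ (otherwise $z\mb{b}=\mb{0}$ with $z>0$), and in particular $\bX^s\neq\mb{0}$, so $\norm{\bX^s}_F>0$ and $\tnorm{\bX^s}_{k,2}^\star>0$ are available as positive scalars throughout. Next, feasibility of $(\bZ,z)$ with $\tnorm{\bZ}_{k,2}^\star=1$ is equivalent to $\bZ=\bX/\tnorm{\bX}_{k,2}^\star$, $z=1/\tnorm{\bX}_{k,2}^\star$ for some $\bX$ with ${\cal A}(\bX)=\mb{b}$; and since the feasible set of \refs{eq:rsprob} is invariant under positive rescaling of $(\bZ,z)$ while the objective $\inprod{\bZ^s}{\bZ}$ is linear with optimal value at least $\inprod{\bZ^s}{\bZ^s}=\norm{\bZ^s}_F^2>0$, the maximum is attained on the slice $\{\tnorm{\bZ}_{k,2}^\star=1\}$. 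Combining these, $(\bZ^s,z^s)\in{\cal O}(\bZ^s)$ holds if and only if
\[
\frac{\inprod{\bX^s}{\bX}}{\tnorm{\bX^s}_{k,2}^\star\,\tnorm{\bX}_{k,2}^\star}\le\frac{\norm{\bX^s}_F^2}{\big(\tnorm{\bX^s}_{k,2}^\star\big)^2}\qquad\text{for every }\bX\text{ with }{\cal A}(\bX)=\mb{b},
\]
because $\bZ^s$ is the image of $\bX^s$ and $\inprod{\bZ^s}{\bZ^s}=\norm{\bX^s}_F^2/\big(\tnorm{\bX^s}_{k,2}^\star\big)^2$.

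I would then clear the (positive) denominators to obtain $\tnorm{\bX}_{k,2}^\star\ge\frac{\tnorm{\bX^s}_{k,2}^\star}{\norm{\bX^s}_F^2}\inprod{\bX^s}{\bX}$ for all $\bX$ with ${\cal A}(\bX)=\mb{b}$, and substitute $\bX=\bX^s+\bY$ with ${\cal A}(\bY)=\mb{0}$ — which parametrizes that affine set exactly, since ${\cal A}(\bX^s)=\mb{b}$. Using $\inprod{\bX^s}{\bX^s+\bY}=\norm{\bX^s}_F^2+\inprod{\bX^s}{\bY}$ and rearranging, the condition becomes
\[
\tnorm{\bX^s+\bY}_{k,2}^\star-\frac{\tnorm{\bX^s}_{k,2}^\star}{\norm{\bX^s}_F^2}\inprod{\bX^s}{\bY}\ge\tnorm{\bX^s}_{k,2}^\star\qquad\text{for every }\bY\text{ with }{\cal A}(\bY)=\mb{0}.
\]
The right-hand side is exactly the objective of \refs{eq:null} evaluated at $\bY=\mb{0}$, so this is precisely the assertion that $\bY=\mb{0}$ is optimal for \refs{eq:null}; since every step above is an equivalence, reading the chain backwards yields the converse implication.

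The only step requiring any care is the reduction to the normalization $\tnorm{\bZ}_{k,2}^\star=1$: one must check that restricting attention to that slice does not discard the maximizer, which follows from positive homogeneity of the constraints together with strict positivity of the optimal value $\norm{\bZ^s}_F^2$. Everything after that is bookkeeping with positive scalars and a single affine change of variable.
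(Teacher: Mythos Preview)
Your argument is correct and follows essentially the same route as the paper: parametrize the feasible set via $\bZ=\bX/\tnorm{\bX}_{k,2}^\star$, rewrite the optimality inequality $\inprod{\bZ^s}{\bZ}\le\inprod{\bZ^s}{\bZ^s}$ in terms of $\bX=\bX^s+\bY$, and identify the result with optimality of $\bY=\mb{0}$ in \refs{eq:null}. The only difference is that you spell out the homogeneity step reducing \refs{eq:rsprob} to the slice $\tnorm{\bZ}_{k,2}^\star=1$, which the paper leaves implicit; your phrasing ``invariant under positive rescaling'' is slightly off (only the conic part ${\cal A}(\bZ)=z\mb{b}$, $z>0$ is scale-invariant), but the conclusion that a positive linear objective on this truncated cone is maximized on the boundary is exactly what is needed.
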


\begin{proof}
Consider $\bY\in\mbox{Null}({\cal A})$, i.e., ${\cal A}(\bY)=\mb{0}$, we then have: $$\displaystyle\left(\frac{\bX^s+\bY}{\tnorm{\bX^s+\bY}_{k,2}^\star},\frac{1}{\tnorm{\bX^s+\bY}_{k,2}^\star}\right)\in{\cal Z}.$$
Clearly,
$\displaystyle\inprod{\frac{\bX^s}{\tnorm{\bX^s}_{k,2}^\star}}{\frac{\bX^s+\bY}{\tnorm{\bX^s+\bY}_{k,2}^\star}}\leq \inprod{\frac{\bX^s}{\tnorm{\bX^s}_{k,2}^\star}}{\frac{\bX^s}{\tnorm{\bX^s}_{k,2}^\star}}$  is equivalent to $$\displaystyle\tnorm{\bX^s+\bY}_{k,2}^\star-\frac{\tnorm{\bX^s}_{k,2}^\star}{\norm{\bX^s}_F^2}\cdot\inprod{\bX^s}{\bY}\geq\tnorm{\bX^s}_{k,2}^\star.
$$
When $\bY=\mb{0}$, we achieve the equality. We have: $(\bZ^s,z^s)\in{\cal O}(\bZ^s)$ if and only the above inequality holds for all $\bY \in\mbox{Null}({\cal A})$, which means $f(\bY;\bX^s)\geq f(\mb{0};\bX^s)$ for all $\bY \in\mbox{Null}({\cal A})$, where $f(\bY;\bX)=\displaystyle\tnorm{\bX+\bY}_{k,2}^\star-\frac{\tnorm{\bX}_{k,2}^\star}{\norm{\bX}_F^2}\cdot\inprod{\bX}{\bY}$. Clearly, it is equivalent to the fact that $\bY=\mb{0}$ is an optimal solution of \refs{eq:null}.
\end{proof}

The result of Proposition \ref{prop:ocond} shows the similarity between the norm ratio minimization problem \refs{eq:ratio} and the norm different minimization problem \refs{eq:diff} with respect to the implementation of the DCA algorithm. It is indeed that the problem \refs{eq:diff} is a d.c. optimization problem and the DCA algorithm can be applied as follows.
\begin{quote}
\textbf{Step 1.} Start with some $\bX^0$ such that ${\cal A}(\bX^0)=\mb{b}$ and set $s=0$.

\noindent
\textbf{Step 2.} Update $\bX^{s+1}=\bX^s+\bY$, where $\bY$ is an optimal solution of the following convex optimization problem
\be
\label{eq:dsprob}
\ba{rl}
\ba{rl}
\displaystyle\min_{\bY} & \displaystyle\tnorm{\bX^s+\bY}_{k,2}^\star - \frac{1}{\norm{\bX^s}_F}\cdot\inprod{\bX^s}{\bY}\\
\st & {\cal A}(\bY)=\mb{0}.
\ea
\ea
\ee
\noindent
\textbf{Step 3.} Set $s\leftarrow s+1$ and repeat Step 2.
\end{quote}
It is clear that $\bX^s$ is a critical point for the problem \refs{eq:diff} if and only if $\bY$ is an optimal solution of \refs{eq:dsprob}. Both problems \refs{eq:rsprob} and \refs{eq:dsprob} can be written in the general form as
\be
\label{eq:asprob}
\ba{rl}
\displaystyle\min_{\bY} & \displaystyle\tnorm{\bX^s+\bY}_{k,2}^\star - \alpha(\bX^s)\cdot\inprod{\bX^s}{\bY}\\
\st & {\cal A}(\bY)=\mb{0},
\ea
\ee
where $\displaystyle\alpha(\bX)=\frac{\tnorm{\bX^s}_{k,2}^\star}{\norm{\bX^s}_F^2}$ for \refs{eq:rsprob} and $\displaystyle\alpha(\bX)=\frac{1}{\norm{\bX^s}_F}$ for \refs{eq:dsprob}, respectively. Given that  ${\cal A}(\bX^s)=\mb{b}$, this problem can be written as 
\be
\label{eq:xsprob}
\ba{rl}
\displaystyle\min_{\bX} & \displaystyle\tnorm{\bX}_{k,2}^\star - \alpha(\bX^s)\cdot\inprod{\bX^s}{\bX}\\
\st & {\cal A}(\bX)=\mb{b}.
\ea
\ee 
The following proposition shows that $\bX^s$ is a critical point of the problem \refs{eq:xsprob} for many functions $\alpha(\cdot)$ if $\text{rank}(\bX^s)\leq k$.
\begin{proposition}
\label{prop:alpha}
If $\text{rank}(\bX^s)\leq k$, $\bX^s$ is a critical point of the problem \refs{eq:xsprob} for any function $\alpha(\cdot)$ which satisfies
\be
\frac{1}{\norm{\bX}_F}\leq\alpha(\bX)\leq\frac{\tnorm{\bX^s}_{k,2}^\star}{\norm{\bX^s}_F^2}.
\ee
\end{proposition}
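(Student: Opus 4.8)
The plan is to unwind the definition of ``critical point'' and then reduce everything to the equality case of the norm chain \refs{eq:normin}. Recall that $\bX^s$ is a critical point of \refs{eq:xsprob} exactly when $\bY=\mb{0}$ solves the convex subproblem \refs{eq:asprob}, i.e.\ when $\bX^s$ is an optimal solution of \refs{eq:xsprob} with the linear-term coefficient frozen at the value $\alpha(\bX^s)$. So it suffices to show that the feasible point $\bX^s$ attains the minimum of $\tnorm{\bX}_{k,2}^\star-\alpha(\bX^s)\inprod{\bX^s}{\bX}$ over $\{\bX:{\cal A}(\bX)=\mb{b}\}$.

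First I would pin down $\alpha(\bX^s)$. Since $\text{rank}(\bX^s)\le k$, the chain \refs{eq:normin} collapses to equalities, so $\tnorm{\bX^s}_{k,2}=\norm{\bX^s}_F=\tnorm{\bX^s}_{k,2}^\star$; note also $\bX^s\neq\mb{0}$ because ${\cal A}(\bX^s)=\mb{b}\neq\mb{0}$, so these quantities are positive. Consequently the lower bound $1/\norm{\bX^s}_F$ and the upper bound $\tnorm{\bX^s}_{k,2}^\star/\norm{\bX^s}_F^2$ on $\alpha(\bX^s)$ coincide, which forces $\alpha(\bX^s)=1/\norm{\bX^s}_F$ regardless of which admissible function $\alpha(\cdot)$ was chosen.

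Next I would bound the subproblem objective from below. For any $\bX$, the dual-norm inequality gives $\inprod{\bX^s}{\bX}\le\tnorm{\bX^s}_{k,2}\,\tnorm{\bX}_{k,2}^\star=\norm{\bX^s}_F\,\tnorm{\bX}_{k,2}^\star$, hence
$$
\tnorm{\bX}_{k,2}^\star-\alpha(\bX^s)\inprod{\bX^s}{\bX}\;\ge\;\tnorm{\bX}_{k,2}^\star\bigl(1-\alpha(\bX^s)\norm{\bX^s}_F\bigr)=0 .
$$
Finally, evaluating the objective at $\bX=\bX^s$ yields $\tnorm{\bX^s}_{k,2}^\star-\alpha(\bX^s)\norm{\bX^s}_F^2=\norm{\bX^s}_F-\norm{\bX^s}_F=0$, so $\bX^s$ attains this lower bound over the feasible set and is therefore an optimal solution of \refs{eq:xsprob}, i.e.\ a critical point.

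There is essentially no hard step here: the argument is a one-line application of the dual-norm inequality combined with the equality case of \refs{eq:normin}. The only thing to be careful about is the bookkeeping — correctly identifying that ``critical point of \refs{eq:xsprob}'' means optimality of the linearized subproblem with $\alpha$ held fixed at $\alpha(\bX^s)$, and observing that the hypothesis $\text{rank}(\bX^s)\le k$ pins this constant down to $1/\norm{\bX^s}_F$.
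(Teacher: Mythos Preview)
Your proof is correct and is essentially the same as the paper's: both pin down $\alpha(\bX^s)=1/\tnorm{\bX^s}_{k,2}$ from the equality case of \refs{eq:normin} and then derive the key inequality $\tnorm{\bX^s+\bY}_{k,2}^{\star}\ge\tnorm{\bX^s}_{k,2}^{\star}+\alpha(\bX^s)\inprod{\bX^s}{\bY}$. The only cosmetic difference is that the paper phrases this via the subdifferential, observing $\alpha(\bX^s)\bX^s\in\partial\tnorm{\bX^s}_{k,2}^{\star}$ and invoking the subgradient inequality, whereas you obtain the same bound directly from the dual-norm (H\"older) inequality $\inprod{\bX^s}{\bX}\le\tnorm{\bX^s}_{k,2}\tnorm{\bX}_{k,2}^{\star}$.
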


\begin{proof}
If $\text{rank}(\bX^s)\leq k$, we have: $\alpha(\bX^s)=1/\tnorm{\bX^s}_{k,2}$ since $\tnorm{\bX^s}_{k,2}=\norm{\bX}_F=\tnorm{\bX^s}_{k,2}^{\star}$. Given that 
$$
\partial\tnorm{\mb{A}}_{k,2}^{\star}=\arg\max_{\bX:\tnorm{\bX}_{k,2}\leq 1}\inprod{\bX}{\mb{A}},
$$ 
we have: $\alpha(\bX^s)\cdot\bX^s\in\partial\tnorm{\bX^s}_{k,2}^{\star}$. Thus for all $\bY$, the following inequality holds:
$$
\tnorm{\bX^s+\bY}_{k,2}^{\star} - \tnorm{\bX^s}_{k,2}^{\star}\geq \inprod{\alpha(\bX^s)\cdot\bX^s}{\bY}.
$$
It implies $\bY=\mb{0}$ is an optimal solution of the problem \refs{eq:asprob} since the optimality condition is
$$
\tnorm{\bX^s+\bY}_{k,2}^{\star} - \tnorm{\bX^s}_{k,2}^{\star}\geq \inprod{\alpha(\bX^s)\cdot\bX^s}{\bY},\quad\forall\,\bY\,:\,{\cal A}(\bY) = \mb{0}.
$$
Thus $\bX^s$ is a critical point of the problem \refs{eq:xsprob}.
\end{proof}
Proposition \ref{prop:alpha} shows that one can choose different functions $\alpha(\cdot)$ such as $\alpha(\bX)=1/\tnorm{\bX}_{k,2}$ for the sub-problem in the general DCA framework to solve the original problem. This generalized sub-problem \refs{eq:xsprob} is a convex optimization problem, which can be formulated as a semidefinite optimization problem given the following calculation of the dual Ky Fan $2$-$k$-norm provided in \cite{doan2016finding}:
\be
\label{eq:msk2dual}
\ba{rl}
\tnorm{\mb{X}}_{k,2}^\star = \min & p + \mbox{trace}(\mb{R})\\
\st & kp - \mbox{trace}(\mb{P}) = 0,\\
\quad & p\mb{I} -\mb{P}\succeq 0,\\
\quad & \begin{pmatrix}
	\mb{P} & -\frac{1}{2}\mb{X}^T\\
	-\frac{1}{2}\mb{X} & \mb{R}
\end{pmatrix}\succeq 0.
\ea
\ee
In order to implement the DCA algorithm, one also needs to consider how to find the initial solution $\bX^0$. We can use the nuclear norm minimization problem \ref{eq:nuc}, the convex relaxation of the rank minimization problem, to find $\bX^0$. A similar approach is to use the following dual Ky Fan $2$-$k$-norm minimization problem to find $\bX^0$ given its low-rank properties:   
\be
\label{eq:kyfan}
\ba{rl}
\displaystyle\min_{\bX} & \tnorm{\bX}_{k,2}^{\star}\\
\st & {\cal {A}}(\bX)=\mb{b}.
\ea
\ee 
This initial problem can be considered as an instance of \refs{eq:xsprob} with $\bX^s=\mb{0}$ (and $\alpha(\mb{0})=1$), which is equivalent to starting the iterative algorithm with $\bX^0=\mb{0}$ one step ahead. We are now ready to provide some numerical results.
\subsection{Numerical Results}
\label{ssec:num}
Similar to Cand\`es and Recht \cite{candes2009exact}, we construct the following the experiment. We generate $\mb{M}$, an $m\times n$ matrix of rank $r$, by sampling two $m\times r$ and $n\times r$ factors $\mb{M}_L$ and $\mb{M}_R$ with i.i.d. Gaussian entries and setting $\mb{M} = \mb{M}_L\mb{M}_R$. The linear map $\cal A$ is constructed with $s$ independent Gaussian matrices $\mb{A}_i$ whose entries follows ${\cal N}(0,1/s)$, i.e.,
$$
{\cal A}(\bX)=\mb{b}\,\Leftrightarrow\,\inprod{\mb{A}_i}{\bX}=\inprod{\mb{A}_i}{\mb{M}} = b_i,\quad i=1,\ldots,s.
$$
We generate $K=50$ matrix $\mb{M}$ with $m=50$, $n=40$, and $r=2$. The dimension of these matrices is $d_r=r(m+n-r)=176$. For each $\mb{M}$, we generate $s$ matrices for the random linear map with $s$ ranging from $180$ to $500$. We set the maximum number of iterations of the algorithm to be $N_{\max} = 100$. The instances are solved using SDPT3 solver \cite{toh1999sdpt3} for semi-definite optimization problems in Matlab. The computer used for these numerical experiments is a 64-bit Windows 10 machine with 3.70GHz quad-core CPU, and 32GB RAM. The performance measure is the relative error $\displaystyle\frac{\norm{\bX-\mb{M}}_F}{\norm{\mb{M}}_F}$ and the threshold $\eps=10^{-6}$ is chosen. We run three different algorithms, \texttt{nuclear} used the nuclear optimization formulation \refs{eq:nuc}, \texttt{k2-nuclear} used the proposed iterative algorithm with initial solution obtained from \refs{eq:nuc}, and \texttt{k2-zero} used the same algorithm with initial solution $\bX^0=\mb{0}$. Figure \ref{fig:ptr2} shows recovery probabilities and average computation times (in seconds) for different sizes of the linear map.

\begin{figure}[ht]
	\includegraphics[width=\textwidth]{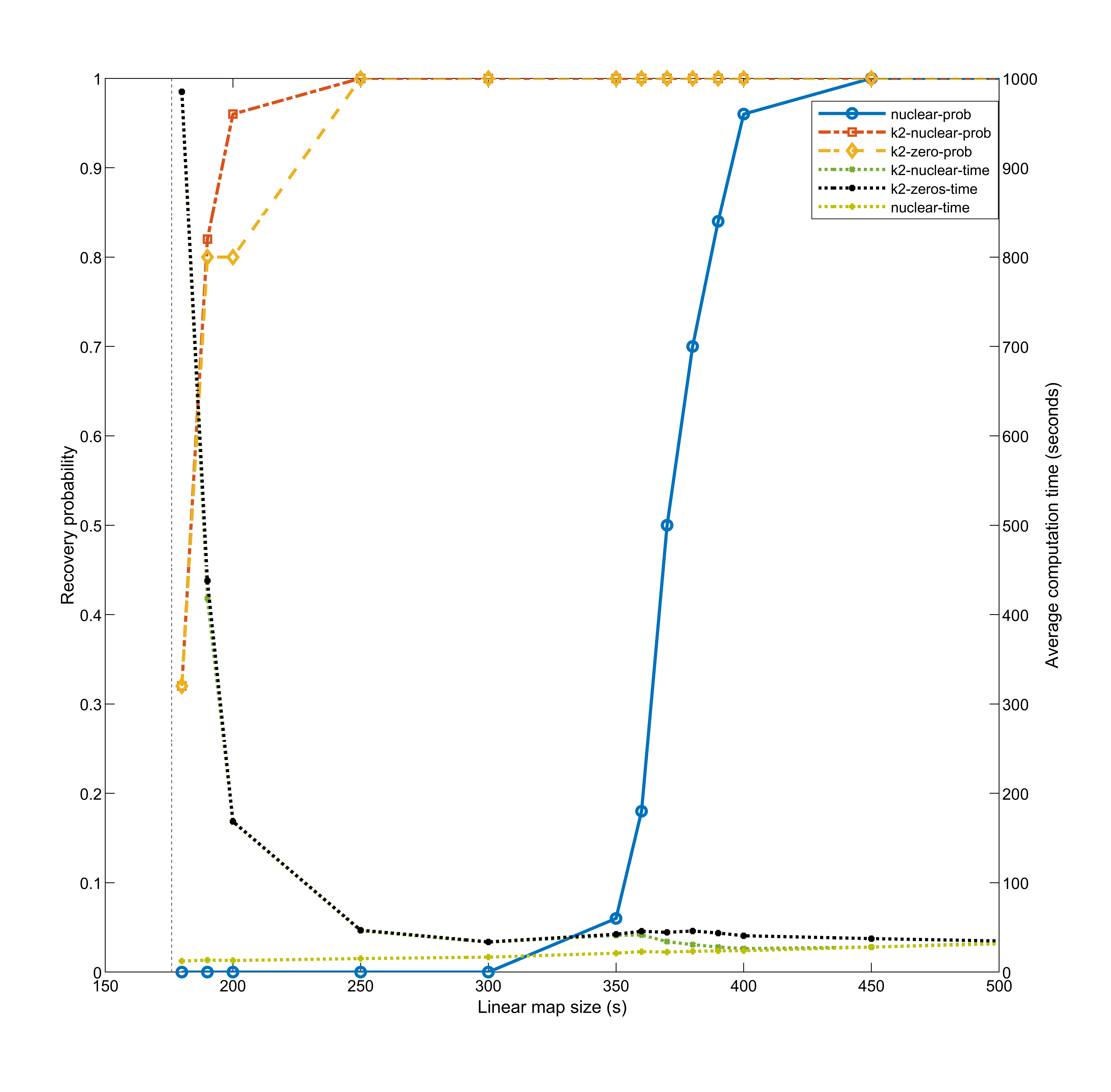}
	\caption{Recovery probabilities and average computation times of three algorithms } 
	\label{fig:ptr2}
\end{figure}

The results show that the proposed algorithm can recover exactly the matrix $\mb{M}$ with $100\%$ rate when $s\geq 250$ with both initial solutions while the nuclear norm approach cannot recover any matrix at all, i.e., $0\%$ rate, if $s\leq 300$. \texttt{k2-nuclear} is slightly better than \texttt{k2-zero} in terms of recoverability when $s$ is small while their average computational times are almost the same in all cases. The efficiency of the proposed algorithm when $s$ is small comes with higher average computational times as compared to that of the nuclear norm approach. For example, when $s=180$, on average, one needs $80$ iterations to reach the solution when the proposed algorithm is used instead of $1$ with the nuclear norm optimization approach. Note that the average number of iterations is computed for all cases including cases when the matix $\mb{M}$ cannot be recovered. For recoverable cases, the average number of iterations is much less. For example, when $s=180$, the average number of iterations for recoverable case is $40$ instead of $80$. When the size of the linear map increases, the average number of iterations is decreased significantly. We only need $2$ extra iterations when $s=250$ or $1$ extra iteration on average when $s=300$ to obtain $100\%$ recover rate when the nuclear norm optimization approach still cannot recover any of the matrices ($0\%$ rate). These results show that the proposed algorithm achieve significantly better recovery rate with a small number of extra iterations in many cases. We also test the algorithms with higher ranks including $r=5$ and $r=10$. Figure \ref{fig:ptmr} shows the results when the size of linear map is $s=\lceil 1.05d_r\rceil$.

\begin{figure}[ht]
	\includegraphics[width=\textwidth]{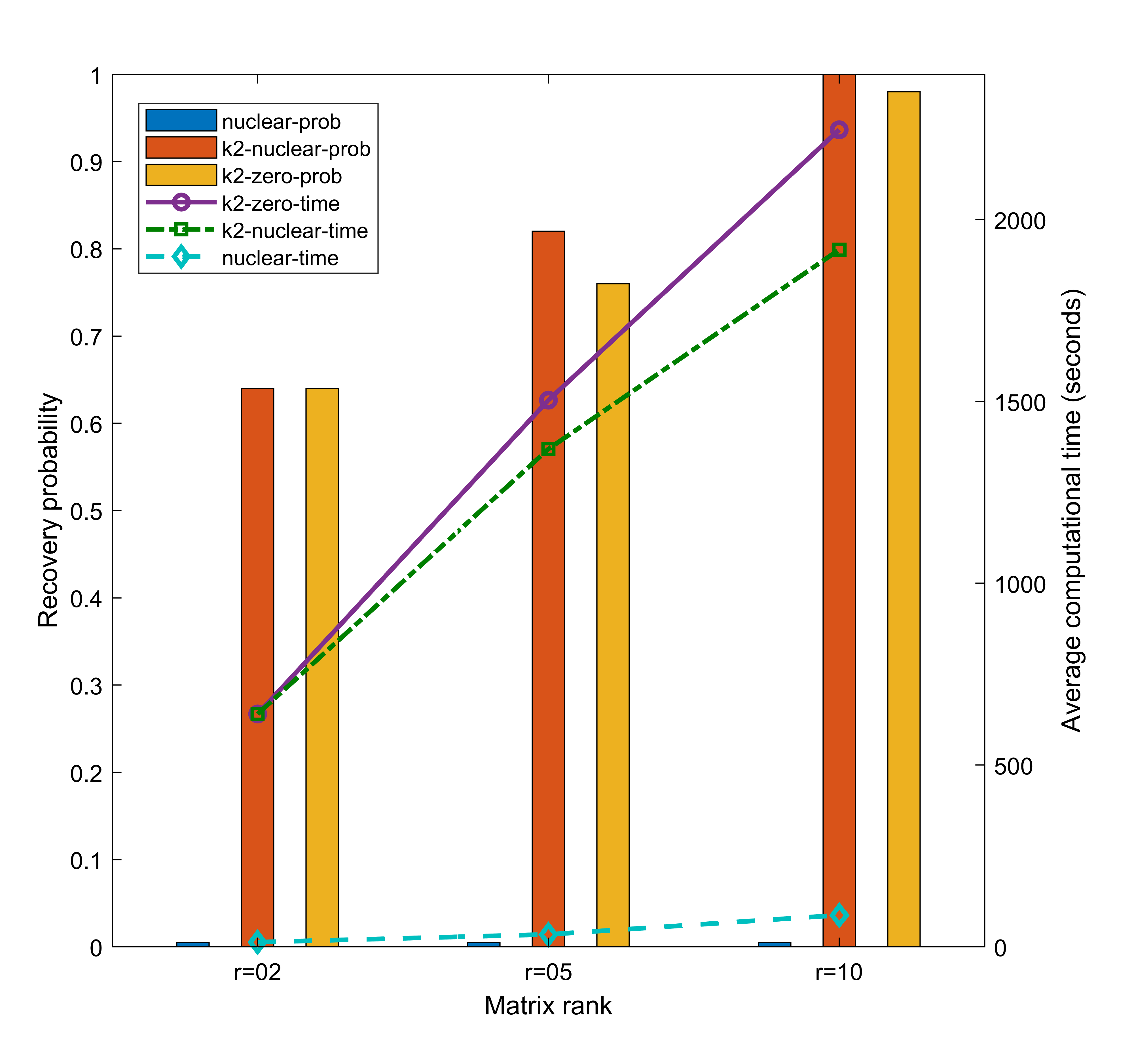}
	\caption{Recovery probabilities and average computation times for different ranks } 
	\label{fig:ptmr}
\end{figure}

These results show that when the size of linear maps is small, the proposed algorithms are significantly better than the nuclear norm optimization approach. With $s=\lceil 1.05d_r\rceil$, the recovery probability increases when $r$ increases and it is close to 1 when $r=10$. The computational time increases when $r$ increases given that the size of the sub-problems depends on the size of the linear map. With respect to the number of iterations, it remains low. When $r=10$, the average numbers of iterations are 22 and 26 for \texttt{k2-nuclear} and \texttt{k2-zero}, respectively. It shows that \texttt{k2-nuclear} is slightly better than \texttt{k2-zero} both in terms of recovery probability and computational time.
\section{Conclusion}
\label{sec:con}
We have proposed non-convex models based on the dual Ky Fan $2$-$k$-norm for low-rank matrix recovery and developed a general DCA framework to solve the models. The computational results are promising.  Numerical experiments with larger instances will be  conducted with first-order algorithm development for the proposed modes as a future research direction.
%
%
%
\bibliographystyle{splncs04}
\bibliography{WCGO}
%
%
%
%
%
\end{document}